\documentclass[runningheads]{llncs}

\newcommand{\refsec}[1] {Section~\ref{#1}}
\newcommand{\reffig}[1] {Figure~\ref{#1}}

\usepackage{amsmath}
\usepackage{amssymb}
\setcounter{tocdepth}{3}
\usepackage{graphicx}
\usepackage{ucs}
\usepackage[utf8x]{inputenc}

\usepackage{subcaption}
\usepackage{tikz}
\usepackage[outline]{contour}

\usepackage[T1]{fontenc}
\usepackage{amsmath,amssymb}
\usepackage{authblk}
\usepackage{booktabs}
\usepackage{hyperref}
\hypersetup{
    colorlinks,
    hidelinks,
    citecolor=blue,
    filecolor=black,
    linkcolor=red,
    urlcolor=green,
    plainpages=false,
    pdfpagelabels=true
}

\usepackage{url}
\newcommand{\keywords}[1]{\par\addvspace\baselineskip
\noindent\keywordname\enspace\ignorespaces#1}

\newcommand{\D}[2]{ \ensuremath{ \frac{\mathrm{d} #1 }{\mathrm{d} #2 } }}

\sloppy

\begin{document}

\mainmatter

\title{On the Partial Analytical Solution\\of the Kirchhoff Equation}

\titlerunning{On the Partial Analytical Solution of the Kirchhoff Equation}

\author{Dominik~L.~Michels\inst{1} \and Dmitry~A.~Lyakhov\inst{2} \and Vladimir~P.~Gerdt\inst{3} \and Gerrit~A.~Sobottka\inst{4} \and Andreas~G.~Weber\inst{4}}
\institute{Computer Science Department, Stanford University, 353 Serra Mall, MC 9515, Stanford, CA 94305, USA\\
\email{michels@cs.stanford.edu}\\
\and
Radiation Gaseous Dynamics Lab, A.~V.~Luikov Heat and Mass Transfer Institute of the National Academy of Sciences of Belarus, P.~Brovka St 15, 220072 Minsk, Belarus\\
\email{lyakhovda@hmti.ac.by}\\
\and
Group of Algebraic and Quantum Computations, Joint Institute for Nuclear Research, Joliot-Curie 6, 141980 Dubna, Moscow Region, Russia\\
\email{gerdt@jinr.ru}
\and
Multimedia, Simulation and Virtual Reality Group, Institute of Computer Science II, University of Bonn, Friedrich-Ebert-Allee 144, 53113 Bonn, Germany\\
\email{sobottka@cs.uni-bonn.de}, \email{weber@cs.uni-bonn.de}}

\authorrunning{D.~L.~Michels et al.}

\toctitle{Lecture Notes in Computer Science}
\tocauthor{On the Partial Analytical Solution of the Kirchhoff Equation}

\maketitle

\begin{abstract}
We derive a combined analytical and numerical scheme to solve the (1+1)-dimensional differential Kirchhoff system. Here the object is to obtain an accurate as well as an efficient solution process. Purely numerical algorithms typically have the disadvantage that the quality of solutions decreases enormously with increasing temporal step sizes, which results from the numerical stiffness of the underlying partial differential equations. To prevent that, we apply a differential Thomas decomposition and a Lie symmetry analysis to derive explicit analytical solutions to specific parts of the Kirchhoff system. These solutions are general and depend on arbitrary functions, which we set up according to the numerical solution of the remaining parts. In contrast to a purely numerical handling, this reduces the numerical solution space and prevents the system from becoming unstable. The differential Kirchhoff equation describes the dynamic equilibrium of one-dimensional continua, i.e. slender structures like fibers. We evaluate the advantage of our method by simulating a cilia carpet.\keywords{Differential Thomas Decomposition, Kirchhoff Rods, Lie Symmetry Analysis, Partial Analytical Solutions, Partial Differential Equations, Semi-analytical Integration.}
\end{abstract}

\section{Introduction}

In general, to study algebraic properties of a polynomially nonlinear system of partial differential equations (PDEs), the system has to be completed to involution~\cite{Seiler:2010}. In particular, by such completion one can check the consistency of the system, detect arbitrariness in its analytical solution, eliminate a subset of dependent variables, and verify whether some another PDE is a differential consequence of the system, i.e. verify whether this PDE vanishes on all solutions of the system. However, algorithmically, a polynomially nonlinear PDE system may not admit completion. Instead, one can decompose such a system into finitely many involutive subsystems with disjoint sets of solutions. This is done by the Thomas decomposition~\cite{BGLHR:2012}.\footnote{For a more-general overview of the formal algorithmic elimination for PDEs, we additionally refer to \cite{Robertz:2014}.}

In the present paper we apply first the differential Thomas decomposition to the equation system which comprises four linear PDEs in two independent and six dependent variables and also two quadratically nonlinear algebraic relations in the dependent variables. This partial differential-algebraic system is a parameter-free subsystem of the (1+1)-dimensional Kirchhoff equation. In doing so, we detect the arbitrariness in the general (analytical) solution to that subsystem. Then, by applying the classical theorem about conservative vector fields~\cite{Fikhtengol'ts:66} and performing integration by parts we reduce the parameter-free subsystem to a system of three nonlinear PDEs in two independent variables that contains one arbitrary function.

As the next step, we perform the computer algebra based Lie symmetry analysis to the reduced system and  discover that this system is equivalent (modulo Lie group transformations) to another system of three nonlinear PDEs in two independent variables $(x,y)$ and two dependent variables $f$ and $g$. The last system comprises two equations of zero Gaussian curvature for the surfaces $z=f(x,y)$ and $z=g(x,y)$ and one more equation that relates $f$ and $g$. To construct the solution of the system we use the classical parametrization of a zero curvature surface~\cite{HartmanNirenberg:59}. As a result, we obtain a closed-form analytical solution to the parameter-free subsystem under consideration and prove that the  solution is general. This opens up a wide range for application of symbolic-numeric methods to the (1+1)-dimensional Kirchhoff equation. All symbolic computations related to the construction of the general analytical solution were done with Maple.

On that basis, we derive a combined analytical and numerical scheme to solve the (1+1)-dimensional Kirchhoff equation. This system of partial differential and algebraic equations describes the dynamic equilibrium of one-dimensional continua like slender structures, artificial fibers, human hair, cilia and flagella. Since their dynamical behavior act on different time-scales, numerical stiffness enters the problem and leads to huge instabilities in the case of basic purely numerical solution schemes. To prevent that, we exploit the explicit analytical expressions found for the solution to the parameter-free subsystem. In so doing, we set up the arbitrary functions in the exact solution according to the numerical solution of the Kirchhoff equation. In contrast to a purely numerical handling, this reduces the numerical solution space and prevents the Kirchhoff system from becoming unstable. The advantage is demonstrated by simulating a cilia carpet.

In this regard our specific contributions are as follows.
\begin{enumerate}
 \item First we derive a suitable formulation of the (1+1)-dimensional differential Kirchhoff equation from the Cosserat equation and study the parameter-free subsystem of the resulting system using computer algebra-based methods and software.
 \item Then we find a closed form analytical solution to the parameter-free subsystem that depends on four parametrization functions.
 \item On the basis of results 1 and 2, we construct a combined analytical and numerical scheme to solve the full (1+1)-dimensional Kirchhoff equation.
 \item Finally, we demonstrate the advantage of our method by simulating a cilia carpet and compare the results with a purely numerical handling.
\end{enumerate}

\section{Kirchhoff's Rod Theory}

For a detailed derivation of the special Cosserat theory of rods, we refer to our last year's contribution \cite{MLGSW:2015}. According to the formulation presented in there, the rods in classical Cosserat theory can undergo shear and extension. We can avoid this by setting the linear strains to $\boldsymbol{\mathsf{\nu}}:=(\nu_1,\nu_2,\nu_3)=(0, 0, 1)$ in local coordinates. Geometrically this means, that the angle between the director $\boldsymbol{d}_3$ and the tangent to the centerline, $\partial_s\boldsymbol{r}$, always remains zero (no shear) and that the tangent to the centerline always has unit length (no elongation).\footnote{For a more-general overview of the special Cosserat and Kirchhoff rod theory, we refer to \cite{Antman:95}.} We further reduce the system to two dimensions by setting $\kappa_3=\omega_3=\upsilon_3=0$. Finally we separate the resulting equations into two PDE subsystems
\begin{eqnarray}
\label{eqn:MainSystem1}
\rho A \partial_t \vec{\upsilon} &=& \partial_s \vec{n} + \vec{f},\\
\label{eqn:MainSystem2}
\rho I \partial_t \vec{\omega} &=& \partial_s \vec{m} + \mathrm{adiag}(1,-1)\,\vec{n} + \vec{l}
\end{eqnarray}
and
\begin{eqnarray}
\label{eqn:ParameterFreeSystem1}
\partial_t \vec{\kappa} &=& \partial_s \vec{\omega},\\
\label{eqn:ParameterFreeSystem2}
\partial_s \vec{\upsilon} &=& \mathrm{adiag}(1,-1)\,\vec{\omega},
\end{eqnarray}
and additionally a set of constraints
\begin{eqnarray}
\label{eqn:Constraint1}
\vec{\omega}\,\mathrm{adiag}(1,-1)\,\vec{\kappa}^\mathsf{T}&=&0,\\
\label{eqn:Constraint2}
\vec{\upsilon}\,\mathrm{adiag}(1,-1)\,\vec{\kappa}^\mathsf{T}&=&0
\end{eqnarray}
with an antidiagonal matrix $$\mathrm{adiag}(1,-1)=\begin{pmatrix}\phantom{-}0 & \phantom{-}1\\-1 & \phantom{-}0\end{pmatrix}.$$
Please note, that the equations~(\ref{eqn:ParameterFreeSystem1})--(\ref{eqn:ParameterFreeSystem2}) are parameter free, whereas equations~(\ref{eqn:MainSystem1})--(\ref{eqn:MainSystem2}) include parameters $\rho,A,I$.

\section{Derivation of Explicit Analytical Solution}
\label{sec:AnalyticalExpressions}

In this section, we derive an exact analytical solution to (\ref{eqn:ParameterFreeSystem1})--(\ref{eqn:ParameterFreeSystem2}) under the constraints (\ref{eqn:Constraint1})--(\ref{eqn:Constraint2}).

\subsection{Arbitrariness in General Solution}
\label{sec:FunctionalArbitrariness}

To determine the functional arbitrariness in the general analytical solution to  equations~(\ref{eqn:ParameterFreeSystem1})--(\ref{eqn:Constraint2}) with non-vanishing values of the dependent variables, we apply to them the differential Thomas decomposition~\cite{BGLHR:2012,Robertz:2014} by using its implementation in Maple\footnote{The Maple package {\sc DifferentialThomas} is freely available and can be downloaded via \href{http://wwwb.math.rwth-aachen.de/thomasdecomposition/}{{\tt http://wwwb.math.rwth-aachen.de/thomasdecomposition/}}.} under the choice of a degree-reverse-lexicographical ranking with
\[
s\succ t;\qquad \omega_2\succ \omega_1\succ \kappa_2\succ \kappa_1\succ \upsilon_2\succ \upsilon_1\,.
\]

Then, under the condition $\omega_1\omega_2\kappa_1\kappa_2\upsilon_1\upsilon_2\neq 0$, the decomposition algorithm outputs two involutive differential systems. One of them is given by the set of equations
\begin{eqnarray*}
&&\upsilon_1\underline{\omega_2}-\upsilon_2\omega_1=0\,, \\
&&\underline{\partial_s \omega_1}-\partial_t\kappa_1=0\,, \\
&&\upsilon_1\underline{\kappa_2}-\upsilon_2\kappa_1=0\,,\\
&& \upsilon_1^2\omega_1^2\underline{\partial_s\kappa_1}-\kappa_1^2\upsilon_1\omega_1\partial_t\upsilon_1
  -2\kappa_1\upsilon_1^2\omega_1\partial_t\kappa_1 + \kappa_1\upsilon_2\omega_1^3=0\,,\\
&&\underline{\partial_s\upsilon_2}+\omega_1=0\,,\\
&&\kappa_1\upsilon_1^2\underline{\partial_t\upsilon_2}-\kappa_1\upsilon_2\upsilon_1\partial_t\upsilon_1+
(\upsilon_2^2+\upsilon_1^2)\omega_1^2=0\,,\\
&&\upsilon_1\underline{\partial_s\upsilon_1}-\upsilon_2\omega_1=0
\end{eqnarray*}
and inequations
$$ \upsilon_1\neq 0\,,\ \upsilon_2\neq 0\,,\ \kappa_1\neq 0\,,\ \omega_2\neq 0$$
with underlined {\em leaders}, i.e., the highest ranking derivatives occurring in the equations. The second involutive system in the output contains the equation $\upsilon_1^2+\upsilon_2^2=0$ which has no real solution under our assumption of non-vanishing dependent variables.

The differential dimensional polynomial~\cite{Markus:2014} computed for the left-hand sides of the above equation system
\[
 3\binom{l+1}{l}+1
\]
shows that the general analytical solution depends on three arbitrary functions of one variable.

\begin{remark}\label{remark:Cauchy}
This functional arbitrariness can also be understood via well-posedness of the Cauchy (initial value) problem~\cite{Gerdt:2010,Seiler:2010}. For the above involutive system, the Cauchy problem has a unique
analytical solution in a vicinity of a given initial point $(s_0,t_0)$ if $\upsilon_2(s_0,t_0)$ is an arbitrary constant and $\kappa_1(s_0,t),\omega_1(s_0,t),\upsilon_1(s_0,t)$ are arbitrary functions of $t$, analytical at the point $t=t_0$.
\end{remark}

\begin{remark}\label{remark:arbitrariness}
In the below described procedure, to construct a solution to~(\ref{eqn:ParameterFreeSystem1})--(\ref{eqn:Constraint2}) we have to solve a number of intermediate partial differential equations. In doing so, it suffices to take into account the functional arbitrariness in the solutions to those equations and to neglect the arbitrariness caused by arbitrary constants.
\end{remark}

\subsection{Reduction to Two Dependent Variables}
If $D$ is an open simply connected subset of the real space $\mathbb{R}^2$ of the independent variables $(s,t)$, then the partial differential equations (\ref{eqn:ParameterFreeSystem1}) define conservative vector fields $(\kappa_1,\omega_1)$ and $(\kappa_2,\omega_2)$ on $D$.

By the classical theorem of calculus~\cite{Fikhtengol'ts:66}\footnote{The proof is also given in {\tt http://www.owlnet.rice.edu/\textasciitilde{}fjones/chap12.pdf
\,}}.  there exist functions $p_1(s,t)$ and $p_2(s,t)$ such that
\begin{equation}\label{eqsGreen1}
\kappa_1=\partial_s p_1\,,\quad \omega_1=\partial_t p_1\,,\quad \kappa_2=\partial_s p_2\,,\quad \omega_2=\partial_t p_2\,.
\end{equation}
If one takes into account these relations, then the application of the same theorem to the equations (\ref{eqn:ParameterFreeSystem2}) transforms them into the equivalent form
\begin{equation}\label{eqsGreen2}
\upsilon_1=\partial_t f\,,\quad \upsilon_2=\partial_t g\,,\quad p_1=-\partial_s g\,,\quad p_2=\partial_s f\,,
\end{equation}
where $f=f(s,t)$ and $g=g(s,t)$ are some functions.

Therefore, the parameter-free system (\ref{eqn:ParameterFreeSystem1})--(\ref{eqn:Constraint2}) is equivalent to (i.e. it has the same solution space as) the following system of two nonlinear PDEs:
\begin{eqnarray}
&& f_{st} g_{ss}-g_{st}f_{ss}=0\,,\label{eq1fg}\\
&& g_{ss} g_t + f_{ss} f_t = 0\,. \label{eq2fg}
\end{eqnarray}
Here and below we use the notation: $f_s\equiv \partial_s f$, $f_t\equiv \partial_t f$, $f_{st}\equiv \partial_s\partial_t f$, et cetera.
We look for a solution of (\ref{eqn:ParameterFreeSystem1})--(\ref{eqn:Constraint2}) such that
\begin{equation}\label{nonzero}
\omega_1\omega_2\kappa_1\kappa_2\upsilon_1\upsilon_2\neq 0\,.
\end{equation}
The equations (\ref{eq1fg})--(\ref{eq2fg}) form a linear system in $f_{ss}$ and $g_{ss}$. It has non-trivial solutions only if
\begin{equation}
g_{st}g_t+f_{st}f_t=0\label{eq3fg}
\end{equation}
what is equivalent to the equality $\omega_1\upsilon_2-\omega_2\upsilon_1=0$ that follows from (\ref{eqn:Constraint1})--(\ref{eqn:Constraint2}).

Obviously, equation (\ref{eq3fg}) admits an integration by parts which yields
\[
 (g_t)^2+(f_t)^2-h(t)=0\,,
\]
where $h(t)$ is an arbitrary function. Therefore, the system (\ref{eqn:ParameterFreeSystem1})--(\ref{eqn:Constraint2}) is equivalent to the system
\begin{eqnarray}
&& g_{ss}g_t+f_{ss}f_t=0\,,\label{eq:fg}\\
&& (g_t)^2+(f_t)^2 - h=0\,,\quad h_s=0\,.\label{eq:fgc}
\end{eqnarray}

Now we apply the Thomas decomposition algorithm in its implementation in Maple~\cite{BGLHR:2012} for the differential elimination of the function $f$ from the system (\ref{eq:fg})--(\ref{eq:fgc}) and independently the differential elimination of the function $g$ from the same system. In doing so, inequation~(\ref{nonzero}) expressed in terms of $f$ and $g$ is taken into account. As a result of each these two eliminations, equation (\ref{eq:fg}) is replaced with another equation whereas equations (\ref{eq:fgc}) are preserved by the elimination.

Consider now the PDE system which comprises two equations in (\ref{eq:fgc}) and two more equations\footnote{Note that these equations are symmetric under the interchange of $f$ with $g$.} obtained by the differential elimination of the functions $f$ and $g$ as explained above:
\begin{eqnarray}
&& g_{ss}g_t h_t+2\,h\cdot (g_{st})^2-2\,h\,g_{ss}g_{tt}=0\,,\label{eq:fgc1} \\
&& f_{ss}f_t h_t+2\,h\cdot (f_{st})^2-2\,h\,f_{ss}f_{tt}=0\,,\label{eq:fgc2}\\
&& (g_t)^2+(f_t)^2 - h=0\,,\quad h_s=0\,.\label{eq:fgc3}
\end{eqnarray}

\begin{remark}
If one fixes the ranking and applies the differential Thomas decomposition to the equation systems (\ref{eq:fg})--(\ref{eq:fgc}) and (\ref{eq:fgc1})--(\ref{eq:fgc3}) extended with inequation (\ref{nonzero}), then in both cases the same set of equations and inequations is output. This shows explicitly the equivalence of both PDE systems as having the same solution space.
\end{remark}

\subsection{Lie Symmetry Analysis}

Since PDE system (\ref{eq:fgc1})--(\ref{eq:fgc3}) is symmetric with respect to $f$ and $g$, it suffices to  apply the methods of the Lie symmetry analysis~\cite{Ibragimov:95} to (\ref{eq:fgc2})--(\ref{eq:fgc3}). As in our paper~\cite{MLGSW:2015}, to perform related symbolic computations we use the Maple package~{\sc Desolve}~\cite{CarminatiVu:00} and its routine {\em gendef} to generate the determining linear PDE system for the infinitesimal point transformations. Then we apply the Maple package {\sc Janet}\footnote{The Maple package {\sc Janet} is freely available and can be downloaded via \href{http://wwwb.math.rwth-aachen.de/Janet/}{{\tt http://wwwb.math.rwth-aachen.de/Janet/}}.}~\cite{Janet:2003} to complete the determining system to involution\footnote{The same output can also be generated using the package {\sc DifferentialThomas}.}.

As a result, the involutive determining system containing twenty-three one-term and two-term linear PDEs is easily solvable by the Maple routine {\em pdsolve}. This gives for the coefficients of the infinitesimal Lie symmetry generator
\begin{equation}
  {\cal X}=\xi_1(s,t,f,h)\,\partial_s + \xi_2(s,t,f,h)\,\partial_t + \eta_1(s,t,f,h)\,\partial_f + \eta_2(s,t,f,h)\,\partial_h \label{generator}
\end{equation}
the following expressions:
\[
 \eta_1=c_1s+c_2f+c_3\,,\ \eta_2=-\D{\phi}{t}+c_4\,,\ \xi_1=c_5s+c_6f+c_7\,,\ \xi_2=\phi\,,
\]
where $c_i$ $(1\leq i\leq 7)$ are arbitrary constants, and $\phi=\phi(t)$ is an arbitrary function.

As we are looking for the functional arbitrariness only (see Remark~\ref{remark:arbitrariness}), we put $c_i=0$. The Lie symmetry group defined by the generator~(\ref{generator}) is obtained by solving the Lie equations
\[
 \frac{ds}{d\alpha}=\xi_1\,,\quad  \frac{dt}{d\alpha}=\xi_2\,,\quad  \frac{df}{d\alpha}=\eta_1\,,\quad \frac{dh}{d\alpha}=\eta_2\,.
\]
For the independent variables this gives the one-parameter transformation group
\[
   \tilde{s}=s,\quad \tilde{t}=\Phi(t,\alpha)\,.
\]
Here $\alpha$ is a group parameter and $\Phi$ is an arbitrary function such that $\phi(t)=\partial_\alpha \Phi|_{\alpha=0}$.  Now we can formulate and prove one of the main theoretical results of the present paper.

\begin{lemma} Let $h(t)> 0$ be a given function. Then for any solution to the equation (\ref{eq:fgc2}) there exists an invertible transformation of variables $(s,t)\rightarrow (x,y)$ of the form
\begin{equation}
   x=s, \quad y=\Psi(t) \label{transformation}
\end{equation}
such that the solution is an image of a solution to the equation
\begin{equation}
  f_{xx}f_{yy}-f_{xy}^2=0\,. \label{zerocurvaturef}
\end{equation}
\end{lemma}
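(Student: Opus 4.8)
The plan is to prove the claim by a direct change of variables, using precisely the form (\ref{transformation}) that the Lie symmetry group $\tilde s = s$, $\tilde t = \Phi(t,\alpha)$ computed above suggests: since the symmetry acts only on $t$ and leaves $s$ fixed, a transformation $x=s$, $y=\Psi(t)$ of the same shape is the natural candidate for bringing (\ref{eq:fgc2}) into canonical form. I would leave $\Psi$ unspecified at first and fix it only at the end, so that the unwanted lower-order terms cancel.

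First I would express the derivatives of the solution, regarded as a function of the new variables $(x,y)$, in terms of $\Psi$. Because $x=s$, the $s$-derivatives are unchanged, $f_{ss}=f_{xx}$, while the chain rule gives $f_t=\Psi' f_y$, $f_{st}=\Psi' f_{xy}$ and $f_{tt}=(\Psi')^2 f_{yy}+\Psi'' f_y$, where $\Psi'=\mathrm{d}\Psi/\mathrm{d}t$. Substituting these into (\ref{eq:fgc2}) and collecting terms, I expect the left-hand side to split into two groups,
\begin{equation*}
  \bigl(\Psi' h_t - 2h\,\Psi''\bigr)\, f_{xx} f_y \;+\; 2h\,(\Psi')^2\,\bigl(f_{xy}^2 - f_{xx}f_{yy}\bigr)=0\,,
\end{equation*}
the second of which is, up to the factor $2h(\Psi')^2$, exactly the zero Gaussian curvature expression appearing in (\ref{zerocurvaturef}).

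The key step is then to annihilate the first group by imposing the condition $\Psi' h_t - 2h\,\Psi''=0$ on the profile $\Psi$. This is a linear second-order ODE which I would rewrite as $\Psi''/\Psi' = h_t/(2h)$ and integrate once to obtain $\Psi'(t)=c\sqrt{h(t)}$; choosing $c=1$ yields $\Psi(t)=\int^t \sqrt{h(\tau)}\,\mathrm{d}\tau$. With this choice the displayed equation collapses to $2h(\Psi')^2\bigl(f_{xy}^2 - f_{xx}f_{yy}\bigr)=0$, and since $2h(\Psi')^2 = 2h^2 \neq 0$ the solution satisfies precisely (\ref{zerocurvaturef}).

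It remains to verify that the map $(s,t)\mapsto(x,y)$ is invertible, and this is where the hypothesis $h(t)>0$ is essential and, I expect, the only genuine obstacle rather than the routine substitution: positivity of $h$ makes $\sqrt{h}$ real and guarantees $\Psi'=\sqrt{h}>0$, so $\Psi$ is strictly increasing and hence a diffeomorphism onto its image, while $x=s$ is trivially invertible. Therefore every solution of (\ref{eq:fgc2}) is the image, under the inverse of (\ref{transformation}), of a solution of (\ref{zerocurvaturef}), as claimed. The symmetry under the interchange of $f$ and $g$ noted earlier then transfers the identical conclusion to (\ref{eq:fgc1}).
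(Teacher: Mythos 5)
Your proposal is correct and follows essentially the same route as the paper: a direct change of variables $x=s$, $y=\Psi(t)$, leading to the condition $\Psi''/\Psi'=h_t/(2h)$, i.e.\ $(\Psi')^2=h$, under which equation (\ref{eq:fgc2}) becomes the zero-curvature equation (\ref{zerocurvaturef}); the paper phrases this by matching the right-hand sides $\Psi_{tt}/\Psi_t$ and $h_t/(2h)$ of the transformed equations, while you collect terms and annihilate the coefficient of $f_{xx}f_y$, which is the same computation. Your explicit remark that $h>0$ gives $\Psi'=\sqrt{h}>0$ and hence invertibility is a small but welcome addition to what the paper states.
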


\begin{proof}
First, we note that the transformation (\ref{transformation}) is invertible if and only if $\Psi_t\neq 0$. Second, we apply this transformation to (\ref{zerocurvaturef}). This gives
\begin{equation}
  \frac{f_{ss}f_{tt}-(f_{st})^2}{f_{ss}f_t}=\frac{\Psi_{tt}}{\Psi_t}\,. \label{rhsF}
\end{equation}
On the other hand, equation (\ref{eq:fgc2}) can be rewritten as
\begin{equation}
  \frac{f_{ss}f_{tt}-(f_{st})^2}{f_{ss}f_t}=\frac{h_t}{2\,h}\,. \label{rhsC}
\end{equation}
We see that at $(\Psi_t)^2=h$ the right-hand sides of (\ref{rhsF}) and (\ref{rhsC}) coincide.\hfill$\Box$
\end{proof}
This lemma immediately implies the following proposition.

\begin{proposition}
The PDE system (\ref{eq:fgc1})--(\ref{eq:fgc3}) can be obtained from the system
\begin{eqnarray}
&& f_{xx}f_{yy}-(f_{xy})^2=0\,, \label{eq:fGaussian}\\
&& g_{xx}g_{yy}-(g_{xy})^2=0\,,\label{eq:gGaussian}\\
&& (f_y)^2+(g_y)^2-1=0\label{eq:fgConstraint}
\end{eqnarray}
by the transformation (\ref{transformation}) of the independent variables with $h=\Psi_t^2$.
\end{proposition}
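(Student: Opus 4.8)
The plan is to verify directly that the inverse of the change of variables (\ref{transformation}) carries each of the three equations (\ref{eq:fGaussian})--(\ref{eq:fgConstraint}) into the corresponding equation of (\ref{eq:fgc1})--(\ref{eq:fgc3}), where the function $h$ is introduced through the definition $h := \Psi_t^2$. Since $\Psi$ depends on $t$ only, $h$ is a function of $t$ alone, so the equation $h_s = 0$ of (\ref{eq:fgc3}) holds automatically, and $h>0$ wherever $\Psi_t \neq 0$, which matches the positivity hypothesis of the preceding Lemma.

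First I would dispatch the two nonlinear second-order equations by invoking the Lemma. The Lemma already establishes that, under (\ref{transformation}) with $\Psi_t^2 = h$, the zero-Gaussian-curvature equation (\ref{zerocurvaturef}) for $f$ is equivalent to (\ref{eq:fgc2}): the computation (\ref{rhsF})--(\ref{rhsC}) reduces both to the single identity $\frac{f_{ss}f_{tt}-(f_{st})^2}{f_{ss}f_t} = \frac{\Psi_{tt}}{\Psi_t}$, once one observes that $\frac{h_t}{2h} = \frac{\Psi_{tt}}{\Psi_t}$ under $h=\Psi_t^2$. Because the full system (\ref{eq:fgc1})--(\ref{eq:fgc3}) is invariant under the interchange $f \leftrightarrow g$, the identical argument applied to $g$ shows that (\ref{eq:gGaussian}) transforms into (\ref{eq:fgc1}). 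No new computation is required here beyond applying the Lemma twice.

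It remains to transform the algebraic constraint (\ref{eq:fgConstraint}). For this I would simply apply the chain rule to (\ref{transformation}): expressing the pulled-back functions in the variables $(s,t)$ and using $y=\Psi(t)$, $x=s$, one gets $f_t = \Psi_t f_y$ and $g_t = \Psi_t g_y$, while $s$-derivatives are unchanged. Squaring and summing yields $(f_t)^2 + (g_t)^2 = \Psi_t^2((f_y)^2 + (g_y)^2)$, and substituting the constraint $(f_y)^2 + (g_y)^2 = 1$ together with the definition $h = \Psi_t^2$ produces exactly $(f_t)^2 + (g_t)^2 - h = 0$, the first equation of (\ref{eq:fgc3}). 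This closes the argument.

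I do not anticipate a genuine obstacle: the analytic heart of the statement, namely the equivalence of the curvature equations with (\ref{eq:fgc1})--(\ref{eq:fgc2}), is already carried by the Lemma, and what remains is the bookkeeping of the chain rule together with the consistent identification $h = \Psi_t^2$. The only point requiring mild care is to keep the logical direction straight, namely that we are \emph{generating} (\ref{eq:fgc1})--(\ref{eq:fgc3}) from (\ref{eq:fGaussian})--(\ref{eq:fgConstraint}) and not the converse, and to record that invertibility of (\ref{transformation}), i.e. $\Psi_t \neq 0$, equivalently $h>0$, is precisely what guarantees that the two descriptions share the same solution space.
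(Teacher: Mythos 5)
Your proposal is correct and takes essentially the same route as the paper, which simply states that the Proposition follows immediately from the preceding Lemma; your write-up supplies exactly the intended details (the Lemma applied to $f$, the same argument for $g$ by the $f\leftrightarrow g$ symmetry, and the chain rule $f_t=\Psi_t f_y$, $g_t=\Psi_t g_y$ together with $h=\Psi_t^2$ for the constraint and $h_s=0$).
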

Thus, we reduced the parameter-free subsystem (\ref{eqn:ParameterFreeSystem1})--(\ref{eqn:Constraint2}) of the Kirchhoff equation to the equivalent system (\ref{eq:fGaussian})--(\ref{eq:fgConstraint}). In doing so, the independent variables of the latter system are related to the independent variables of the former one by equalities (\ref{transformation}). The advantages of the reduced system are:
\begin{enumerate}
\item There are only two dependent variables.
\item The system is symmetric with respect to the dependent variables.
\item They are separated in (\ref{eq:fGaussian}) and (\ref{eq:gGaussian}).
\item The differential equation of the form (\ref{eq:fGaussian}) is very well known in the theory of analytical surfaces and deeply studied in the literature~\cite{Krivoshapko:2015}. It means that the surface $z=f(x,y)$ has zero Gaussian curvature. Such surfaces are locally isometric to a plane and admit a parametrization constructed in the classical paper~\cite{HartmanNirenberg:59}. In the following subsection we use this parametrization to derive the general analytical solution to (\ref{eqn:ParameterFreeSystem1})--(\ref{eqn:Constraint2}).
\end{enumerate}

\subsection{Parametrized Analytical Solution}

In~\cite{HartmanNirenberg:59} a parametric form of a general analytical solution to an isolated zero curvature equation was found. For equation~(\ref{eq:fGaussian}) this parametrization reads
\begin{equation}
 f=a_1(u)v+b_1(u)\,,\quad x=a_2(u)v+b_2(u)\,,\quad y=a_3(u)v+b_3(u)\,. \label{f-parametrization}
\end{equation}
Here $a_i$ and $b_i$ are analytical functions of the parameter $u$.

\begin{remark}\label{rem:parametrization}
In general, the function $g$ may have the parameterizations of $x$ and $y$ such that $a_i(u),b_i(u)$ $(i=2,3)$ are different from that in (\ref{f-parametrization}). We assume, however, that both functions $f$ and $g$ have the same parametrizations of $x$ and $y$ so that $a_2\neq 0$.
\end{remark}

Our assumption allows to express the parameter $v$ linearly in terms of $x$ and leads to a rather compact parametrization of functions $f,g$ of the form
\begin{equation} \label{parametrization}
f=A_1(u)\,x+B_1(u)\,,\quad y=A(u)\,x+B(u)\,,\quad g=M(u)\,x+N(u)
\end{equation}
with six functions. However, these functions are not arbitrary. By taking the first and second order derivatives of both sides in the equality
\[
 f\left(x,A(u)\,x+B(u)\right)=A_1(u)x+B_1(u)
\]
with respect to $x$ and $u$ we obtain a linear equation system in $f_x,f_y,f_{xx},f_{yy},f_{xy}$. Its solution  allows to express the partial derivatives $f_{xx},f_{yy},f_{xy}$ in terms of functions $A(u),A_1(u),B(u),B_1(u)$ and their first derivatives. Then, the substitution of these expressions into equation (\ref{eq:fGaussian}) shows that this equation is satisfied if
\begin{equation}
(A_1)_u=P(u)\,A_u\,,\quad (B_1)_u=P(u)\,B_u\,, \label{eq:forf}
\end{equation}
where $P(u)$ can be any function. Similarly, equation (\ref{eq:gGaussian}) is rewritten as two differential equalities
\begin{equation}
M_u=Q(u)\,A_u\,,\quad N_u=Q(u)\,B_u \label{eq:forg}
\end{equation}
with unspecified $Q(u)$. Next, equation~(\ref{eq:fgConstraint}) implies the algebraic equality $Q(u)^2+P(u)^2=1$ or equivalently
\begin{equation}
    Q(u)=\sin(C(u))\,,\qquad P(u)=\cos(C(u)) \label{eq:forfg}
\end{equation}
with arbitrary $C(u)$.

As the next step, we apply the following transformation of the independent variables, which are inverse to (\ref{transformation}):
\[
   s=x\,,\quad t=F(y)
\]
with $F=\Psi^{-1}$. For this purpose, we use the relations
\begin{eqnarray*}
&& f\left(s,F\left(A(u)\,s+B(u)\right)\right)=A_1(u)s+B_1(u)\,,\\
&& g\left(s,F\left(A(u)\,s+B(u)\right)\right)=M(u)s+N(u)
\end{eqnarray*}
together with (\ref{eq:forf}), (\ref{eq:forg}), and (\ref{eq:forfg})
to compute all partial derivatives of the functions $f$ and $g$ with respect to $s$ and $t$ that occur in (\ref{eqsGreen1}) and (\ref{eqsGreen2}).

This gives the analytical solution:
\begin{eqnarray}
\vec{\kappa} &=& -\frac{A^2(u)C'(u)}{A'(u)s+B'(u)}\begin{pmatrix}\cos(C(u))\\\sin(C(u))\end{pmatrix},\nonumber \\
\vec{\omega} &=& \frac{A(u)C'(u)}{F'(A(u)s+B(u))(A'(u)s+B'(u)))}\begin{pmatrix}\cos(C(u))\\\sin(C(u))\end{pmatrix}, \label{eqn:solution}\\
\vec{\upsilon} &=& \frac{1}{F'(A(u)s+B(u))}\begin{pmatrix}\cos(C(u))\\\sin(C(u))\end{pmatrix}\,,\quad t=F\left(A(u)\,s+B(u)\right)\,. \nonumber
\end{eqnarray}
Here $A(u),B(u),C(u)$ are arbitrary analytical functions of one variable, $F$ is an arbitrary analytical function of the argument $A(u)\,s + B(u)$, and the prime mark denotes differentiation of a function with respect of its argument.

Substituting (\ref{eqn:solution}) in equations (\ref{eqn:ParameterFreeSystem1})--(\ref{eqn:Constraint2}) shows that these equations are satisfied for any functions $A,B,C,F$. In \refsec{sec:FunctionalArbitrariness} we saw that the general analytical solution depends on three arbitrary functions, each of one variable. The following theorem shows that the functional arbitrariness in (\ref{eqn:solution}) is superfluous.

\begin{theorem}
Set $B(u)=u$ in (\ref{eqn:solution}). Then the analytical solution
\begin{eqnarray}
\vec{\kappa} &=& -\frac{A^2(u)C'(u)}{A'(u)s+1}\begin{pmatrix}\cos(C(u))\\\sin(C(u))\end{pmatrix}\,,\nonumber \\
\vec{\omega} &=& \frac{A(u)C'(u)}{F'(A(u)s+u)(A'(u)s+1)))}\begin{pmatrix}\cos(C(u))\\\sin(C(u))\end{pmatrix}, \label{eqn:gensolution}\\
\vec{\upsilon} &=& \frac{1}{F'(A(u)s+u)}\begin{pmatrix}\cos(C(u))\\\sin(C(u))\end{pmatrix}\,,\quad t=F\left(A(u)\,s+u\right)\,. \nonumber
\end{eqnarray}
to (\ref{eqn:ParameterFreeSystem1})--(\ref{eqn:Constraint2}) is general.
\end{theorem}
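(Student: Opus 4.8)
The plan is to establish two facts: that the restricted family (\ref{eqn:gensolution}) still solves the parameter-free system, and that restricting to $B(u)=u$ discards no solutions, so that the remaining three functions $A,C,F$ furnish the full functional arbitrariness predicted in \refsec{sec:FunctionalArbitrariness}. The first fact is immediate: since (\ref{eqn:solution}) was shown to satisfy (\ref{eqn:ParameterFreeSystem1})--(\ref{eqn:Constraint2}) for \emph{any} choice of $A,B,C,F$, the particular choice $B(u)=u$ (whence $B'(u)=1$) is again a solution, and substituting $B'=1$ into (\ref{eqn:solution}) reproduces (\ref{eqn:gensolution}) verbatim. The whole weight of the proof therefore rests on the second fact, generality.

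The key observation is that the parameter $u$ is an internal, unobservable label: the physical fields $\vec{\kappa},\vec{\omega},\vec{\upsilon}$ and the relation $t=F(A(u)s+B(u))$ are invariant under any invertible reparametrization $u\mapsto\psi(u)$ with $\psi'\neq 0$. I would verify this by setting $\hat A:=A\circ\psi$, $\hat B:=B\circ\psi$, $\hat C:=C\circ\psi$ and leaving $F$ untouched. The argument $A(u)s+B(u)=\hat A(w)s+\hat B(w)$ and the angle $C(u)=\hat C(w)$ are unchanged pointwise, so $\vec{\upsilon}$ and the $t$-relation are manifestly invariant; in $\vec{\kappa}$ and $\vec{\omega}$ every first derivative acquires a factor $\psi'$ under the chain rule, so that in the ratios $A^2C'/(A's+B')$ and $AC'/(A's+B')$ the factor $\psi'$ cancels between numerator and denominator. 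Hence the reparametrized data describe the same solution, and the four functions in (\ref{eqn:solution}) carry only three functions' worth of genuine freedom, matching the count from the differential dimensional polynomial.

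With invariance in hand, I would exploit the gauge freedom to normalize $B$. Under the non-degeneracy (\ref{nonzero}) one has $B'(u)\neq 0$, so $B$ is locally invertible; choosing $\psi=B^{-1}$ gives $\hat B(w)=B(B^{-1}(w))=w$, which is precisely the normalization $B(u)=u$. Thus every solution represented by (\ref{eqn:solution}) is already represented by (\ref{eqn:gensolution}) after this relabeling. Since (\ref{eqn:solution}) itself is the general solution of the parameter-free system---it descends from the Hartman--Nirenberg parametrization (\ref{f-parametrization}), which is the general solution of the zero curvature equation (\ref{eq:fGaussian})---no solution is lost, and (\ref{eqn:gensolution}) inherits generality while depending on exactly the three arbitrary functions $A,C,F$.

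The main obstacle I anticipate is justifying $B'\neq 0$ so that the gauge $B(u)=u$ is attainable: I would trace how the inequations (\ref{nonzero}) translate into conditions on $A,B,C$, and argue that the locus $B'\equiv 0$ is a degenerate subcase (there $B$ is constant and the roles of the parametrizing functions collapse) which either falls outside the non-vanishing regime or can be normalized by fixing a different function instead. A secondary point requiring care is confirming that the $\psi'$ factors cancel \emph{exactly}---including in the derivative $A'(u)$ hidden inside the denominators---so that we obtain a genuine finite gauge symmetry of the parametric representation rather than merely an infinitesimal one.
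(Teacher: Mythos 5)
Your reparametrization observation is correct and worth having: under $u\mapsto\psi(w)$ with $\psi'\neq 0$ the factors $\psi'$ do cancel in the ratios $A^2C'/(A's+B')$ and $AC'/(A's+B')$, so the four functions in (\ref{eqn:solution}) carry at most three functions' worth of freedom and, wherever $B'\neq 0$, the gauge $B(u)=u$ is attainable. But this only reduces the theorem to the claim that the \emph{four}-function family (\ref{eqn:solution}) is general, and that claim is exactly what has never been established --- it is the content of the theorem, not a prior fact you may invoke. Your justification, ``it descends from the Hartman--Nirenberg parametrization, which is the general solution of the zero curvature equation,'' is where the argument breaks: the Hartman--Nirenberg result~\cite{HartmanNirenberg:59} gives generality for a \emph{single} equation (\ref{eq:fGaussian}), whereas (\ref{eqn:solution}) was derived for the coupled system (\ref{eq:fGaussian})--(\ref{eq:fgConstraint}) only after the unproved restriction of Remark~\ref{rem:parametrization} (that $f$ and $g$ share the same parametrization of $x$ and $y$, with $a_2\neq 0$) and the further specialization to the $x$-linear form (\ref{parametrization}). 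A priori these restrictions could discard solutions, so generality of (\ref{eqn:solution}) cannot be assumed; your argument is circular at this point.

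The paper closes this gap by a different route, which your proposal would need to replicate in some form: it uses the well-posed Cauchy problem of Remark~\ref{remark:Cauchy} (three arbitrary functions $\upsilon_1(0,t),\omega_1(0,t),\kappa_1(0,t)$ and one constant, which by the involutivity analysis determine the solution uniquely) and shows directly that for arbitrary nonvanishing data (\ref{CauchyProblem}) one can solve the induced system (\ref{NewCauchy}) for $C(u)$, $F(u)$ and $A(u)$ --- an ODE initial value problem with regular right-hand sides near $u=0$ --- so that (\ref{eqn:gensolution}) matches any admissible initial data. That argument simultaneously proves generality of the three-function family and renders the $B(u)=u$ normalization harmless, without ever needing to certify the four-function family first. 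Your secondary worry about $B'\neq 0$ is real but minor by comparison; the essential missing idea is a mechanism (Cauchy data matching or an equivalent) that certifies no solution of (\ref{eqn:ParameterFreeSystem1})--(\ref{eqn:Constraint2}) satisfying (\ref{nonzero}) lies outside the parametrized family in the first place.
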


\begin{proof}
Assume without loss of generality that the initial point is $(s_0,t_0)=(0,0)$ and pose the Cauchy problem for (\ref{eqn:ParameterFreeSystem1})--(\ref{eqn:Constraint2}) in a vicinity of this point. Then the well-posed Cauchy problem reads (see Remark \ref{remark:Cauchy}):
\begin{equation}
 \upsilon_1(0,t)=f_1(t)\,, \quad \omega_1(0,t)=f_2(t)\,,\quad \kappa_1(0,t)=f_3(t)\,,\quad \upsilon_2(0,0)=C_1\,,\label{CauchyProblem}
\end{equation}
where $f_1,f_2,f_3$ are analytic functions and $C_1$ is an arbitrary constant. Substitution of (\ref{eqn:gensolution}) into (\ref{CauchyProblem}) implies equations
\begin{eqnarray}
&& \frac{\cos(C(u))}{F'(u)} = f_1(F(u))\,,\nonumber \\
&& \frac{C'(u)A(u)\cos(C(u))}{F'(u)} = f_2(F(u))\,,\nonumber \\
&& -C'(u)A^2(u)\cos(C(u)) = f_3(F(u))\,, \label{eqsforf}\\
&& \frac{\sin(C(0))}{F'(0)}=C_1\,,\nonumber \\
&& F(0)=0\,. \nonumber
\end{eqnarray}
Now we aim to show that for any predetermined functions satisfying
$$ f_1(t)\neq 0\,,\quad f_2(t)\neq 0\,,\quad f_3(t)\neq 0\,,\quad \upsilon_2(0,0)=C_1\neq 0$$
it is always possible to match up functions $C(u),A(u),F(u)$ such that in a vicinity of $(0,0)$ solution (\ref{eqn:gensolution}) satisfy the initial data (\ref{CauchyProblem}). The squared second equation in (\ref{eqsforf}) divided by the third equation gives the equality
$$ -\frac{C'(u)\cos(C(u))}{(F'(u))^2}=\frac{f^2_2(F(u))}{f_3(F(u))}$$
which together with the first and third equations in (\ref{eqsforf}) implies
\begin{eqnarray}
&& C'(u)=-\frac{f_2^2(F(u))\cos(C(u))}{f_3(F(u))f_1^2(F(u))}\,,\quad C(0)=\arctan\left(\frac{C_1}{f_1(0)}\right)\,,\nonumber \\
&& F'(u)=\frac{\cos(C(u))}{f_1(F(u))}\,,\quad F(0)=0\,,\label{NewCauchy}\\
&& A(u)=-\frac{f_3(F(u))f_1(F(u))}{f_2(F(u))\cos(C(u))}\,. \nonumber
\end{eqnarray}
If $f_1(0)\neq 0$, $f_2(0)\neq 0$, $f_3(0)\neq 0$, then in some vicinity of zero the problem (\ref{NewCauchy}) has the unique solution. Indeed, the first two equations in (\ref{NewCauchy}) define a Cauchy problem with regular right-hand sides, and the last equation sets function $A(u)$ explicitly. In doing so, $C(0)\in (\pi/2,-\pi/2)$ and, hence, $\cos(C(0))\neq 0$. In a vicinity of $u=0$ all factors occurring in $C'(u),F'(u),A(u)$ do not vanish, and the unique solution of this system satisfies (\ref{eqsforf}) identically. The last is easily verified by substitution of (\ref{NewCauchy}) in (\ref{eqsforf}).
\hfill$\Box$
\end{proof}

\section{Numerical Experiments}
\label{sec:Experiments}

The resulting analytical solutions (\ref{eqn:gensolution}) for the system (\ref{eqn:ParameterFreeSystem1})--(\ref{eqn:ParameterFreeSystem2}) under the constraints (\ref{eqn:Constraint1})--(\ref{eqn:Constraint2}) contain four parameterization functions, which can be determined by the numerical integration of the equations~(\ref{eqn:MainSystem1})--(\ref{eqn:MainSystem2}). The substitution of the equations~(\ref{eqn:gensolution}) into the equations~(\ref{eqn:MainSystem1})--(\ref{eqn:MainSystem2}), the replacement of the spatial derivatives with central differences, and the replacement of the temporal derivatives according to the numerical scheme of a forward Euler integrator, leads to an explicit expression.\footnote{We do not explicitly write out the resulting equations here for brevity.} Iterating over this recurrence equation allows for the simulation of the dynamics of a Kirchhoff rod.

We present this in the context of the simulation of the beating pattern of a cilium. Such kind of scenarios are of interest in the context of simulations in biology and biophysics, since this kind of patterns occur widely in nature. For example cilia carpets in the interior of the lung are responsible for the mucus transport, see \cite{FB:1986}. The results are shown in \reffig{fig:BeatingPattern}. Compared to a purely numerical handling using a discretization of the equations (\ref{eqn:MainSystem1})--(\ref{eqn:ParameterFreeSystem2}) in a similar way, the step size can be increased by three orders of magnitude, which leads to an acceleration of two orders of magnitude. This is of special interest for complex systems like cilia carpets, in which multiple cilia are beating in parallel with phase differences in order to produce the appearance of a wave. This is illustrated in \reffig{fig:BeatingPattern}.

\begin{figure}
\centering
\includegraphics[width=1.0\textwidth]{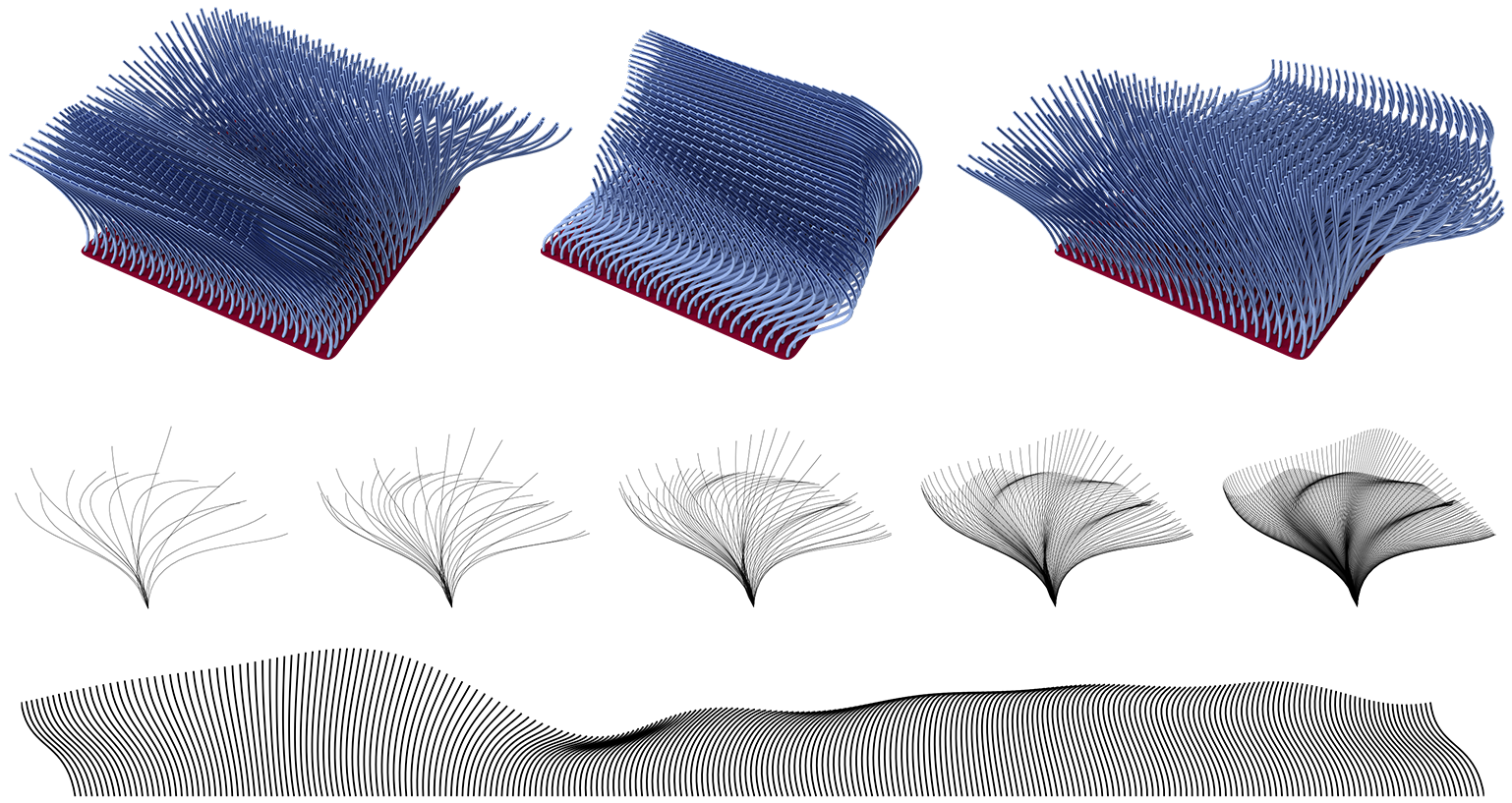}
\caption{Simulation of a cilia carpet (top) composed of multiple cilia beating in a metachronal rhythm (middle). This produces the appearance of a wave (below).}
\label{fig:BeatingPattern}
\end{figure}

\section{Conclusion}

In this contribution, we have studied the two-dimensional Kirchhoff system. To prevent from numerical problems which result from the numerical stiffness of the underlying differential equations, we developed a combined, partially symbolic and partially numeric, integration scheme. Within the symbolic part we constructed an explicit analytical solution to the parameter-free part of Kirchhoff system and proved that this solution is general. The application of the analytical solution prevents from numerical instabilities and allows for highly accurate and efficient simulations. This was demonstrated for the scenario of a cilia carpet, which could be efficiently simulated two orders of magnitude faster compared to a purely numerical handling.

\section*{Acknowledgements}

This work was partially supported by the Max Planck Center for Visual Computing and Communication (D.L.M.) as well as by the grant No.~13-01-00668 from the Russian Foundation for Basic Research (V.P.G.). The authors thank Robert Bryant for useful comments on the solution of PDE system (\ref{eq:fGaussian})-(\ref{eq:fgConstraint}) and the anonymous referees for their remarks and suggestions.



\end{document}